\newtheorem{thm}{Theorem}[section]
\newtheorem{cor}[thm]{Corollary}
\theoremstyle{definition}
\theoremstyle{remark}
\numberwithin{equation}{section}
\begin{document}
\title[Hyperstability of a functional equation]
{Hyperstability of a functional equation}

\author{Eszter Gselmann}
\address{
Institute of Mathematics\\
University of Debrecen\\
P. O. Box: 12.\\
Debrecen\\
Hungary\\
H--4010}
\email{gselmann@math.klte.hu}

\begin{abstract}
The aim of this paper is to prove that the parametric fundamental equation of information
is hyperstable on its open as well as on its closed domain, assuming that the parameter
is negative. As a corollary of the main result, it is also proved that the system
of equation that defines the alpha-recursive information measures is stable.
\end{abstract}

\thanks{This research has been supported by the Hungarian Scientific Research Fund
(OTKA) Grants NK 68048 and K 62316.}
\subjclass{39B82, 94A17}
\keywords{Stability, hyperstability, entropy of degree $\alpha$,
parametric fundamental equation of information}
\maketitle

\section{Introduction}

The stability theory of functional equations mainly deals with the following question:
Is it true that the solution of a given equation differing slightly from a given one,
must necessarily be close to the solution of the equation in question?
In case of a positive answer we say that the equation in question is
\emph{stable} (see e.g. Forti \cite{For95}).
The investigation of the multiplicative Cauchy equation highlighted a new phenomenon,
which is nowadays called \emph{superstability} (see e.g. Moszner \cite{Mos04}).
In this case the so-called stability inequality implies that the function in question
is either bounded or it is the solution of the functional equation itself.
Furthermore, it can happen that there is no such an alternative, that is,
all the solutions of the stability inequality are exactly the solutions of the functional equation.
In this case we speak about \emph{hyperstability} (see e.g. Maksa--P\'{a}les \cite{MP01}).

In this paper we shall prove that the parametric fundamental equation of
information, that is,
\begin{equation}\label{Eq1.1}
f(x)+(1-x)^{\alpha}f\left(\frac{y}{1-x}\right)=
f(y)+(1-y)^{\alpha}f\left(\frac{x}{1-y}\right)
\end{equation}
is hyperstable, provided that the parameter $\alpha$ is negative.
Equation \eqref{Eq1.1} arises in the characterization problem
of the 3-semi-symmetric, $\alpha$-recursive information measures
(see Acz\'{e}l--Dar\'{o}czy \cite{AD75} and Ebanks--Sahoo--Sander \cite{ESS98}).

Throughout this paper we will use the following notations.
Let $\mathbb{R}$ denote the set of the real numbers and for each
$n=2, 3, \ldots$ we define the sets $D_{n}$ and $\overline{D}_{n}$, respectively, by
\[
D_{n}=\left\{(p_{1}, \ldots p_{n})\in\mathbb{R}^{n}\vert p_{1}, \ldots, p_{n}, \sum^{n}_{i=1}p_{i}\in ]0, 1[\right\}
\]

and
\[
\overline{D}_{n}=\left\{(p_{1}, \ldots p_{n})\in\mathbb{R}^{n}
\vert p_{1}, \ldots, p_{n}\in [0, 1[, \sum^{n}_{i=1}p_{i}\leq 1\right\}.
\]

In the investigations \eqref{Eq1.1} is supposed to hold on
$D_{2}$ or on $\overline{D}_{2}$.
The general solution of equation \eqref{Eq1.1} is contained in the
following theorem (see also Maksa \cite{Mak82} and Ebanks--Sahoo--Sander \cite{ESS98}).

\begin{thm}\label{Thm1.1}
Let $\alpha\in\mathbb{R}$, $\alpha\neq 1$ and $f:]0,1[\rightarrow\mathbb{R}$ be
a function. Then functional equation \eqref{Eq1.1} holds for all $(x, y)\in D_{2}$
if and only if there exist $c, d\in\mathbb{R}$ such that
\[
f(x)=cx^{\alpha}+d(1-x)^{\alpha}-d
\]
holds for all $x\in ]0, 1[$.
\end{thm}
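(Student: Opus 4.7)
The sufficiency is a straightforward computation: substituting the ansatz $f(x)=cx^{\alpha}+d(1-x)^{\alpha}-d$ into the left-hand side of \eqref{Eq1.1}, and using the identities $(1-x)^{\alpha}(y/(1-x))^{\alpha}=y^{\alpha}$ and $(1-x)^{\alpha}(1-y/(1-x))^{\alpha}=(1-x-y)^{\alpha}$, both sides collapse to the $x\leftrightarrow y$ symmetric expression $cx^{\alpha}+cy^{\alpha}+d(1-x-y)^{\alpha}-d$. This direction works for every $\alpha\in\mathbb{R}$.

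The necessity is the substantive direction, and I would follow the classical route set out in Maksa \cite{Mak82} and Ebanks--Sahoo--Sander \cite{ESS98}. The central idea is to regard \eqref{Eq1.1} as the statement that the two-variable function $T(x,y):=f(x)+(1-x)^{\alpha}f(y/(1-x))$ is symmetric on $D_{2}$, and to exploit this symmetry through a sequence of carefully chosen substitutions. Substituting $y\mapsto(1-x)y$ converts the composite argument $y/(1-x)$ into $y$; pairing the resulting identity with the original equation and with its analogue after $x\mapsto(1-y)x$ produces a system from which the composite arguments can be eliminated.

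After this elimination, the remaining relation is a Pexider/multiplicative Cauchy-type equation for an auxiliary function built from $f$. For $\alpha\neq 1$, its general solution on $]0,1[$ is a linear combination of the power functions $x\mapsto x^{\alpha}$ and $x\mapsto (1-x)^{\alpha}$, with an additive constant whose value is fixed by the normalization implicit in \eqref{Eq1.1}; this produces the claimed two-parameter family.

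The chief obstacle is the absence of \emph{any} regularity assumption on $f$: no continuity, measurability, or boundedness is given. Consequently, the whole argument must be algebraic, and it is not a priori clear that substitutions alone are rich enough to pin down a two-parameter solution set. The hypothesis $\alpha\neq 1$ is also essential: at $\alpha=1$ the fundamental equation of information admits the Shannon entropy $-x\log x-(1-x)\log(1-x)$ as an additional solution, and the reduction to the multiplicative equation used above breaks down.
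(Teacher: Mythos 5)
The paper does not actually prove this theorem: it is quoted as a known result, with the proof deferred to Maksa \cite{Mak82} and Ebanks--Sahoo--Sander \cite{ESS98}. So the only question is whether your proposal would stand on its own. The sufficiency half does: the computation collapsing both sides of \eqref{Eq1.1} to the symmetric expression $cx^{\alpha}+cy^{\alpha}+d(1-x-y)^{\alpha}-d$ is correct and complete, for every $\alpha$.

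The necessity half has a genuine gap, and it sits exactly at the step you describe as reducing to a ``Pexider/multiplicative Cauchy-type equation'' whose general solution on $]0,1[$ is a combination of $x\mapsto x^{\alpha}$ and $x\mapsto(1-x)^{\alpha}$. With no regularity hypothesis on $f$, a multiplicative Cauchy equation has vastly more solutions than power functions (Hamel-basis constructions give $2^{\mathfrak{c}}$ non-measurable ones), so the power-function form cannot be read off from such an equation alone; the arguments in \cite{Mak82} and \cite{ESS98} must exploit much more of the algebraic structure of \eqref{Eq1.1} than a single Cauchy-type relation, and that is where all the work lies. You name ``the absence of any regularity assumption'' as the chief obstacle yourself, but the sketch never resolves it. A concrete symptom: your outline nowhere uses $\alpha\neq 1$ (nor $\alpha\neq 0$), yet at $\alpha=0$ the stated conclusion actually fails --- $f(x)=\log(1-x)$, and more generally $f(x)=\phi(1-x)$ with $\phi$ any solution of $\phi(uv)=\phi(u)+\phi(v)$ on $]0,1[$, satisfies \eqref{Eq1.1} on $D_{2}$ but is not of the form $c\cdot 1+d\cdot 1-d=c$. (This also shows the theorem as printed really needs $\alpha\notin\{0,1\}$; since only $\alpha<0$ is used later, the paper's main results are unaffected.) Any correct necessity argument must visibly break down at $\alpha=0$ and $\alpha=1$, and your sketch gives no indication of where that happens.
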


In \cite{Mak08} Maksa proved that \eqref{Eq1.1} is superstable on
$\overline{D}_{2}$ if $1\neq \alpha> 0$, but this method is
inappropriate on the set $D_{2}$ as well as if the parameter $\alpha$ is negative.
It was conjectured that equation \eqref{Eq1.1} is hyperstable but finally
stability could be proved (see \cite{GM08}).
However, this method was appropriate to prove superstability for the case
$1\neq \alpha>0$.
This unified method can be found in \cite{Gse08}.
Recently it turned out that with a different method we can prove hyperstability for
equation \eqref{Eq1.1} on $D_{2}$ as well as on $\overline{D}_{2}$, if $\alpha<0$.

The next section will be devoted to this result.

All in all, equation \eqref{Eq1.1} is superstable if $1\neq \alpha>0$, it is
stable, provided that $\alpha=0$ and it is hyperstable assuming that
$\alpha<0$ on the set $D_{2}$.
As to the case $\alpha=1$, it was raised by Sz\'{e}kelyhidi during the 28th
International Symposium on Functional Equations (see \cite{Szek91}), and it is still open.

\section{Main result}

Our main result is contained in the following theorem.

\begin{thm}\label{Thm2.1}
Let $\alpha, \varepsilon\in\mathbb{R}$, $\alpha<0$, $\varepsilon\geq 0$
and $f:]0, 1[\rightarrow\mathbb{R}$ be a function.
Assume that
\begin{equation}\label{Eq2.1}
\left|f(x)+(1-x)^{\alpha}f\left(\frac{y}{1-x}\right)-
f(y)-(1-y)^{\alpha}f\left(\frac{x}{1-y}\right)\right|\leq \varepsilon
\end{equation}
holds for all $(x, y)\in D_{2}$.
Then, and only then, there exist $c, d\in\mathbb{R}$ such that
\begin{equation}\label{Eq2.2}
f(x)=cx^{\alpha}+d(1-x)^{\alpha}-d
\end{equation}
for all $x\in ]0, 1[$.
\end{thm}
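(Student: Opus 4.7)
The easy direction of the \emph{iff} is immediate: if $f$ has the form \eqref{Eq2.2}, then by Theorem~\ref{Thm1.1} it satisfies the exact equation \eqref{Eq1.1}, so the left-hand side of \eqref{Eq2.1} is identically zero and the inequality holds trivially. The substantive direction is hyperstability: one must show that \eqref{Eq2.1} already forces the exact equation \eqref{Eq1.1} on $D_{2}$, after which Theorem~\ref{Thm1.1} supplies the representation \eqref{Eq2.2} automatically. Writing
\[
F(x,y):=f(x)+(1-x)^{\alpha}f\!\left(\tfrac{y}{1-x}\right)-f(y)-(1-y)^{\alpha}f\!\left(\tfrac{x}{1-y}\right),
\]
the entire task reduces to passing from $|F(x,y)|\leq\varepsilon$ to $F(x,y)=0$ for every $(x,y)\in D_{2}$.

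The key resource for this passage, and the reason the theorem yields hyperstability rather than only stability, is the sign hypothesis $\alpha<0$. It forces $(1-x)^{\alpha}\to+\infty$ as $x\to 1^{-}$, so that dividing \eqref{Eq2.1} by such a factor converts the uniform bound $\varepsilon$ into an arbitrarily small quantity. I plan to work in the reparametrization $y=(1-x)s$, $s\in(0,1)$, which bijects $D_{2}$ with $(0,1)^{2}$ and turns \eqref{Eq2.1} into
\[
\Bigl|f(x)+(1-x)^{\alpha}f(s)-f((1-x)s)-(1-(1-x)s)^{\alpha}f\!\left(\tfrac{x}{1-(1-x)s}\right)\Bigr|\leq\varepsilon.
\]
The virtue of these coordinates is that $(1-x)^{\alpha}$ appears in front of $f(s)$, a ``clean'' value unrelated to the boundary, while the other three $f$-values involve arguments depending on $x$.

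My plan of attack is to combine a small number of instances of the reparametrized inequality, at argument pairs $(x,s_{1}),\ldots,(x,s_{k})$ chosen so that the values $f(x)$, $f((1-x)s_{i})$ and $f(x/(1-(1-x)s_{i}))$ cancel algebraically across the different instances. What remains should be a linear combination of the $f(s_i)$'s each multiplied by $(1-x)^{\alpha}$, representing $(1-x)^{\alpha}\cdot F(x_{0},y_{0})$ for a prescribed target $(x_0,y_0)$. The triangle inequality bounds the combination by $C\varepsilon$; dividing by $(1-x)^{\alpha}$ gives $|F(x_{0},y_{0})|\leq C\varepsilon(1-x)^{-\alpha}$, and since $-\alpha>0$ the right-hand side tends to $0$ as $x\to 1^{-}$. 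Because $(x_{0},y_{0})\in D_{2}$ was arbitrary, this forces $F\equiv 0$ on $D_{2}$, and Theorem~\ref{Thm1.1} completes the argument.

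The principal obstacle is the algebraic bookkeeping. Since $f$ is assumed to be merely a function (no continuity, measurability, or growth condition), the auxiliary $f$-values produced by the substitutions cannot be suppressed by a limiting procedure—they must vanish identically through the choice of substitutions. Finding the right small family of substitutions for which this telescoping happens is the heart of the proof and the step which, as the introduction notes, presumably frustrated the earlier attempts in \cite{GM08, Gse08}; once such a family is exhibited, the triangle inequality and the divergence of $(1-x)^{\alpha}$ do the rest.
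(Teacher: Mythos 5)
There is a genuine gap, and it sits exactly where you locate it yourself: you describe the overall mechanism (exploit $(1-x)^{\alpha}\to+\infty$ as $x\to 1^{-}$, combine finitely many instances of \eqref{Eq2.1} so that the extraneous $f$-values cancel, then divide and pass to a limit), but you never exhibit the cancellation scheme, and you acknowledge that finding it ``is the heart of the proof.'' That is not routine bookkeeping that can be deferred: a direct search for substitution pairs $(x,s_1),\dots,(x,s_k)$ making the three $x$-dependent $f$-values telescope out of $F$ itself is precisely what one cannot do naively, and without it the argument does not exist. The paper's resolution is structural rather than combinatorial. One introduces the auxiliary function $G(x,y)=f(x)+(1-x)^{\alpha}f\bigl(\tfrac{y}{1-x}\bigr)-f(x+y)$ on $D_2$; the hypothesis \eqref{Eq2.1} then says exactly that the \emph{antisymmetric part} $G(x,y)-G(y,x)$ is bounded by $\varepsilon$, while $G$ itself satisfies the \emph{exact} cocycle identity \eqref{Eq2.5} on $D_3$ (the $f(x+y+z)$ terms cancel by construction). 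Combining three instances of that identity expresses $G(y,z)-(1-x)^{\alpha}G\bigl(\tfrac{y}{1-x},\tfrac{z}{1-x}\bigr)$ entirely in terms of antisymmetric parts of $G$, hence bounds it by $\bigl(2+(1-y)^{\alpha}\bigr)\varepsilon$. The substitution $x=1-t$, $y=tu$, $z=tv$ and the limit $t\to 0^{+}$ (using $t^{-\alpha}\to 0$) then yield that $G$ is $\alpha$-homogeneous, and homogeneity upgrades the $\varepsilon$-bound on the antisymmetric part to exact symmetry of $G$, which is the functional equation; Theorem \ref{Thm1.1} finishes as you say.

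Two further points. First, your reparametrized target is slightly off: the combination the paper obtains is not ``$(1-x)^{\alpha}\cdot F(x_0,y_0)$ plus bounded terms'' in one step, but rather a relation between $G$ at a point and $(1-x)^{\alpha}$ times $G$ at a rescaled point; one still needs a limiting argument (or a second antisymmetrization) to conclude, not a single division by $(1-x)^{\alpha}$. Second, your easy direction and the final appeal to Theorem \ref{Thm1.1} are correct and match the paper. But as submitted, the proposal is a strategy outline with its decisive ingredient --- the function $G$ and its exact cocycle equation --- missing, so it cannot be accepted as a proof.
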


\begin{proof}
Due to Theorem \ref{Thm1.1}., in case the function $f$ is given by formula
(\ref{Eq2.2}), then
\[
f(x)+(1-x)^{\alpha}f\left(\frac{y}{1-x}\right)=
f(y)+(1-y)^{\alpha}f\left(\frac{x}{1-y}\right)
\]
holds for all $(x, y)\in D_{2}$.
Thus inequality (\ref{Eq2.1}) is also satisfied with arbitrary $\varepsilon\geq 0$.
Therefore it is enough to prove the converse direction.

Define the function $G:D_{2}\rightarrow\mathbb{R}$ by
\begin{equation}\label{Eq2.3}
G(x, y)=f(x)+(1-x)^{\alpha}f\left(\frac{y}{1-x}\right)-f(x+y).
\quad \left((x, y)\in D_{2}\right)
\end{equation}
Then inequality (\ref{Eq2.1}) immediately implies that
\begin{equation}\label{Eq2.4}
\left|G(x, y)-G(y, x)\right|\leq \varepsilon
\end{equation}
for all $(x, y)\in D_{2}$.

Let $(x, y, z)\in D_{3}$, then due to the definition of the function $G$,
\[
G(x+y, z)=f(x+y)+(1-(x+y))^{\alpha}f\left(\frac{z}{1-(x+y)}\right)-f(x+y+z),
\]

\[
G(x, y+z)=f(x)+(1-x)^{\alpha}f\left(\frac{y+z}{1-x}\right)-f(x+y+z)
\]
and

\begin{multline*}
(1-x)^{\alpha}G\left(\frac{y}{1-x}, \frac{z}{1-x}\right)\\
=(1-x)^{\alpha}\left[f(\frac{y}{1-x})+
\left(1-\frac{y}{1-x}\right)^{\alpha}f\left(\frac{\frac{z}{1-x}}{1-\frac{y}{1-x}}\right)-
f\left(\frac{y+z}{1-x}\right)\right],
\end{multline*}
therefore
\begin{equation}\label{Eq2.5}
G(x, y)+G(x+y, z)
=G(x, y+z)+(1-x)^{\alpha}G\left(\frac{y}{1-x}, \frac{z}{1-x}\right)
\end{equation}
holds on $D_{3}$, where we used the identity
\[
\frac{z}{1-(x+y)}=\frac{\frac{z}{1-x}}{1-\frac{y}{1-x}}
\]
also.

In what follows we will show that the function $G$ is $\alpha$--homogeneous.
Indeed, interchanging $x$ and $y$ in (\ref{Eq2.5}), we get
\begin{multline*}
G(y, x)+G(x+y, z)\\
=G(y, x+z)+(1-y)^{\alpha}G\left(\frac{x}{1-y}, \frac{z}{1-y}\right).
\quad \left((x, y, z)\in D_{3}\right)
\end{multline*}
Furthermore, equation (\ref{Eq2.5}) with the substitution
\[
(x, y, z)=(y, z, x)
\]
yields that
\[
G(y, z)+G(y+z, x)=G(y, x+z)+(1-y)^{\alpha}G\left(\frac{z}{1-y}, \frac{x}{1-y}\right)
\]
is fulfilled for all $(x, y, z)\in D_{3}$.

Thus
\begin{equation}\label{Eq2.6}
\begin{array}{l}
G(y, z)-(1-x)^{\alpha}G\left(\frac{y}{1-x}, \frac{z}{1-x}\right) \\
=\left\{G(x, y)+G(x+y, z)-G(x, y+z)-(1-x)^{\alpha}G\left(\frac{y}{1-x}, \frac{z}{1-x}\right)\right\} \\
-G(x, y)-G(x+y, z)+G(x, y+z) \\
+\left\{G(y, x)+G(x+y, z)-G(y, x+z)-(1-y)^{\alpha}G\left(\frac{x}{1-y}, \frac{z}{1-y}\right)\right\} \\
+\left\{G(y, z)+G(y+z, x)-G(y, x+z)-(1-y)^{\alpha}G\left(\frac{z}{1-y}, \frac{x}{1-y}\right)\right\} \\
-G(y+z, x)+G(y, x+z)+(1-y)^{\alpha}G\left(\frac{z}{1-y}, \frac{x}{1-y}\right) \\
=G(y, x)-G(x, y) +G(x, y+z)-G(y+z, x) \\
+(1-y)^{\alpha}\left(G\left(\frac{z}{1-y}, \frac{x}{1-y}\right)-G\left(\frac{x}{1-y}, \frac{z}{1-y}\right)\right)
\end{array}
\end{equation}
for all $(x, y, z)\in D_{3}$, since the expressions in the curly brackets are zeros.
Thus \eqref{Eq2.6}, \eqref{Eq2.4} and the triangle inequality imply that
\begin{equation}\label{Eq2.6*}
\left|G(y, z)-(1-x)^{\alpha}G\left(\frac{y}{1-x}, \frac{z}{1-x}\right)\right|
\leq \left(2+(1-y)^{\alpha}\right)\varepsilon
\end{equation}
is fulfilled for all $(x, y, z)\in D_{3}$.
Given any $t\in ]0, 1[$, $(u, v)\in D_{2}$, let
\[
x=1-t, \quad y=tu \quad \text{and} \quad z=tv.
\]
Then $x, y, z\in ]0, 1[$ and
\[
x+y+z=1-t(1-u-v)\in ]0, 1[,
\]
that is $(x, y, z)\in D_{3}$, and inequality (\ref{Eq2.6*}) implies that
\[
\left|G(tu, tv)-t^{\alpha}G(u, v)\right|\leq \left(2+(1-tu)^{\alpha}\right)\varepsilon,
\]
or, after rearranging,
\[
\left|\frac{G(tu, tv)}{t^{\alpha}}-G(u, v)\right|\leq \frac{\left(2+(1-tu)^{\alpha}\right)}{t^{\alpha}}\varepsilon
\]
holds for arbitrary $t\in ]0, 1[$ and $(u, v)\in D_{2}$.
Taking the limit $t\rightarrow 0+$ we obtain that
\[
\lim_{t\rightarrow 0+}\frac{G(tu, tv)}{t^{\alpha}}=G(u, v), \quad \left((u, v)\in D_{2}\right)
\]
since $\lim_{t\rightarrow 0+}(1-tu)^{\alpha}=1$ for all $u\in ]0, 1[$ and
$\lim_{t\rightarrow 0+}t^{-\alpha}=0$, since $\alpha<0$.
This implies that the function $G$ is $\alpha$--homogeneous on $D_{2}$.
Indeed, for arbitrary $s\in ]0, 1[$ and $(u, v)\in D_{2}$

\begin{equation}\label{Eq2.7}
\begin{array}{lcl}
G(su, sv)&=&\lim_{t\rightarrow 0+}\frac{G(t(su), t(sv))}{t^{\alpha}}\\
&=&s^{\alpha}\lim_{t\rightarrow 0+}\frac{G\left((ts)u, (ts)v\right)}{(ts)^{\alpha}}=s^{\alpha}G(u, v).
\end{array}
\end{equation}
At this point of the proof we will show that inequality \eqref{Eq2.4} and equation
\eqref{Eq2.7} together imply the symmetry of the function $G$.
Indeed, due to \eqref{Eq2.4}
\[
\left|G\left(tx, ty\right)-G\left(ty, tx\right)\right|\leq \varepsilon
\]
holds for all $(x, y)\in D_{2}$ and $t\in ]0, 1[$.
Using the $\alpha$-homogeneity of the function $G$, we obtain that
\[
\left|t^{\alpha}G\left(x, y\right)-t^{\alpha}G\left(y, x\right)\right|\leq\varepsilon,
\quad \left((x, y)\in D_{2}, t\in ]0, 1[\right)
\]
or, if we rearrange this,
\[
\left|G\left(x, y\right)-G\left(y, x\right)\right|\leq \frac{\varepsilon}{t^{\alpha}}
\]
holds for all $(x, y)\in D_{2}$ and $t\in ]0, 1[$. Taking the limit $t\rightarrow 0+$, we get
that
\[
G(x, y)=G(y, x)
\]
is fulfilled for all $(x, y)\in D_{2}$, since $\alpha<0$.
Therefore the function $G$ is symmetric. Due to definition \eqref{Eq2.3} this
implies that
\[
f(x)+(1-x)^{\alpha}f\left(\frac{y}{1-x}\right)=
f(y)+(1-y)^{\alpha}f\left(\frac{x}{1-y}\right),  \quad \left((x, y)\in D_{2}\right)
\]
i.e., the function $f$ satisfies the parametric fundamental equation of information on
$D_{2}$. Thus by Theorem \ref{Thm1.1}. there exist $c, d\in\mathbb{R}$
such that
\[
f(x)=cx^{\alpha}+d(1-x)^{\alpha}-d
\]
holds for all $x\in ]0, 1[$.
\end{proof}
\section{Corollaries}

The first corollary of this section says that equation \eqref{Eq1.1}
is hyperstable not only on $D_{2}$ but also on $\overline{D}_{2}$.

\begin{thm}
Let $\alpha, \varepsilon \in\mathbb{R}$ be fixed, $\alpha<0$, $\varepsilon\geq 0$.
Then the function $f:[0,1]\rightarrow\mathbb{R}$ satisfies the inequality
\eqref{Eq2.1} for all $(x, y)\in \overline{D}_{2}$ if,
and only if, there exist $c, d\in\mathbb{R}$
such that
\begin{equation}\label{Eq3.1}
f(x)=\left\{
\begin{array}{lcl}
0, & \mbox{if} & x=0 \\
cx^{\alpha}+d\left(1-x\right)^{\alpha}-d, & \mbox{if} & x\in ]0,1[ \\
c-d, & \mbox{if} & x=1.
\end{array}
\right.
\end{equation}
\end{thm}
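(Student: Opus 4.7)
My plan is to reduce the statement on $\overline{D}_2$ to the one already proved on $D_2$, and then pin down the values $f(0)$ and $f(1)$ by two carefully chosen boundary substitutions in the stability inequality \eqref{Eq2.1}. The necessity direction is routine: Theorem \ref{Thm1.1} shows that the formula for $x\in\,]0,1[$ already solves \eqref{Eq1.1} on $D_2$, and a direct check using $f(0)=0$ and $f(1)=c-d$ verifies that equality (not just the $\varepsilon$-inequality) holds on all of $\overline{D}_2$, provided one agrees on a convention for the boundary terms where $(1-x)^\alpha$ or $x^\alpha$ could appear paired with a zero argument (which only happens in trivial combinations once $f(0)=0$).

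For sufficiency, I would first restrict the hypothesis to $D_2\subset\overline{D}_2$ and invoke Theorem \ref{Thm2.1} to obtain constants $c,d\in\mathbb{R}$ with $f(x)=cx^{\alpha}+d(1-x)^{\alpha}-d$ for every $x\in\,]0,1[$. It then remains only to identify the boundary values.

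To determine $f(0)$, I would plug $x=0$ and an arbitrary $y\in\,]0,1[$ into \eqref{Eq2.1}. Since $y/(1-x)=y$ and $x/(1-y)=0$, the inequality collapses to
\[
\left|f(0)\bigl(1-(1-y)^{\alpha}\bigr)\right|\leq \varepsilon.
\]
Because $\alpha<0$, letting $y\to 1^-$ sends $(1-y)^{\alpha}\to +\infty$, forcing $f(0)=0$. To determine $f(1)$, I would substitute $y=1-x$ for $x\in\,]0,1[$; this lies in $\overline{D}_2$ and satisfies $y/(1-x)=x/(1-y)=1$, so \eqref{Eq2.1} becomes
\[
\bigl|f(x)-f(1-x)+\bigl((1-x)^{\alpha}-x^{\alpha}\bigr)f(1)\bigr|\leq\varepsilon.
\]
Inserting the known formula for $f$ on $]0,1[$ gives $f(x)-f(1-x)=(c-d)(x^{\alpha}-(1-x)^{\alpha})$, so the estimate reduces to
\[
\bigl|\bigl(x^{\alpha}-(1-x)^{\alpha}\bigr)(c-d-f(1))\bigr|\leq\varepsilon.
\]
Letting $x\to 0^+$ makes $x^{\alpha}\to +\infty$ while $(1-x)^{\alpha}\to 1$, and the unboundedness of the factor forces $f(1)=c-d$.

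There is no real obstacle here beyond bookkeeping; the main point I must be careful about is that the two boundary substitutions I use genuinely lie in $\overline{D}_{2}$ (which they do, since in each case the relevant quotients land in $[0,1]$), and that the divergence of $(1-y)^{\alpha}$ and $x^{\alpha}$ at the appropriate endpoints, guaranteed by $\alpha<0$, is what turns the $\varepsilon$-inequality into the exact equations $f(0)=0$ and $f(1)=c-d$. This parallels, in a much lighter form, the mechanism that drove the proof of Theorem \ref{Thm2.1}.
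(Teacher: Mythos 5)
Your proposal is correct and follows essentially the same route as the paper: restrict to $D_{2}$ and apply Theorem \ref{Thm2.1}, then use the boundary substitutions (one variable equal to $0$, and $y=1-x$) together with the unboundedness of the negative power near the endpoint to force $f(0)=0$ and $f(1)=c-d$. Your computation of the first substitution even gets the factor right as a difference, $1-(1-y)^{\alpha}$, where the paper's displayed inequality shows a sum; this is immaterial since both diverge as the relevant variable tends to the endpoint.
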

\begin{proof}
Let $y=0$ in (\ref{Eq2.1}). Then we have that
\[
\left(\left(1-x\right)^{\alpha}+1\right)\left|f\left(0\right)\right|\leq \varepsilon
\quad \left(x\in ]0,1[\right)
\]
Since $\alpha<0$, this yields that $f\left(0\right)=0$.
On the other hand, by Theorem \ref{Thm2.1},
\[
f\left(x\right)=c x^{\alpha}+d\left(1-x\right)^{\alpha}-d
\quad \left(x\in ]0, 1[\right)
\]
with some $c, d\in\mathbb{R}$. Finally, let $x\in ]0, 1[$ and $y=1-x$ in
\eqref{Eq2.1}. Then, again by Theorem \ref{Thm2.1}., there exist $c, d\in \mathbb{R}$
such that
\[
\left|c-d-f\left(1\right)\right|
\left|x^{\alpha}-\left(1-x\right)^{\alpha}\right|\leq
\varepsilon.
\]
Since $\alpha<0$, $f\left(1\right)=c-d$ follows. \\
The converse is an easy computation and it turns out that
$f$ defined by \eqref{Eq3.1} is a solution of \eqref{Eq1.1} on $\overline{D}_{2}$.
\end{proof}

The second corollary concerns a system of equations.

For fixed $\alpha<0$ and $2\leq n\in\mathbb{N}$ define the set
\[
\Gamma^{\circ}_{n}=
\left\{(p_{1}, \ldots, p_{n})\vert p_{i}\in ]0, 1[, i=1, \ldots, n, \sum^{n}_{i=1}p_{i}=1\right\},
\]
and the function $H^{\alpha}_{n}:\Gamma^{\circ}_{n}\rightarrow\mathbb{R}$ by
\[
H^{\alpha}_{n}(p_{1}, \ldots, p_{n})=
\left(2^{1-\alpha}-1\right)^{-1}\cdot \left(\sum^{n}_{i=1}p^{\alpha}_{i}-1\right).
\]
The sequence $(H^{\alpha}_{n})$ is called \emph{the entropy of degree alpha},
and it was investigated among others by Dar\'{o}czy \cite{Dar70}, Havrda--Charv\'{a}t \cite{HC67}
and Tsallis \cite{Tsa88}, from several points of view.

About this topic the reader could consult the monograph of Acz\'{e}l and Dar\'{o}czy \cite{AD75}.
However, we will use only the following properties.

The sequence of functions $I_{n}:\Gamma^{\circ}_{n}\rightarrow\mathbb{R}$ ($n=2, 3, \ldots$)
is called \emph{$\alpha$-recursive}, if
\begin{multline*}
I_{n+1}(p_{1}, \ldots, p_{n+1})=\\
I_{n}(p_{1}+p_{2}, p_{3}, \ldots, p_{n+1})+
(p_{1}+p_{2})^{\alpha}I_{n}\left(\frac{p_{1}}{p_{1}+p_{2}}, \frac{p_{2}}{p_{1}+p_{2}}\right)
\end{multline*}
holds for all $n\geq 2$ and $(p_{1}, \ldots, p_{n+1})\in\Gamma^{\circ}_{n+1}$.

Furthermore, the sequence of functions
$I_{n}:\Gamma^{\circ}_{n}\rightarrow\mathbb{R}$ ($n=2, 3, \ldots$)
is called \emph{$3$-semi-symmetric}, if
\[
I_{3}(p_{1}, p_{2}, p_{3})=I_{3}(p_{1}, p_{3}, p_{2})
\]
holds on $\Gamma^{\circ}_{3}$.

It is easy to see that for all $a, b\in\mathbb{R}$ the sequence of functions
\[
(p_{1}, \ldots, p_{n})\mapsto aH^{\alpha}_{n}(p_{1}, \ldots, p_{n})+b(p_{1}^{\alpha}-1)
\quad \left((p_{1}, \ldots, p_{n})\in\Gamma^{\circ}_{n}\right)
\]
is $\alpha$-recursive and $3$-semi-symmetric. Therefore, the second
corollary can be considered as a stability theorem for a system of equations.

\begin{thm}\label{Thm3.2}
Let $n\geq 2$ be a fixed positive integer,
$\left(I_{n}\right)$ be the sequence of functions
$I_{n}:\Gamma^{\circ}_{n}\rightarrow\mathbb{R}$ and suppose that there exist a sequence
$\left(\varepsilon_{n}\right)$ of nonnegative real numbers and a real number
$\alpha<0$ such that
\begin{multline}\label{Eq3.2}
\left|I_{n}\left(p_{1}, \ldots, p_{n}\right)-\right. \\
\left. I_{n-1}\left(p_{1}+p_{2}, p_{3}, \ldots, p_{n}\right)-
\left(p_{1}+p_{2}\right)^{\alpha}I_{2}\left(\frac{p_{1}}{p_{1}+p_{2}},
\frac{p_{2}}{p_{1}+p_{2}}\right)\right|
\leq \varepsilon_{n-1}
\end{multline}
holds for all $n\geq 3$ and $\left(p_{1}, \ldots, p_{n}\right)\in\Gamma^{\circ}_{n}$,
and
\begin{equation}\label{Eq3.4}
\left|I_{3}\left(p_{1}, p_{2}, p_{3}\right)-I_{3}\left(p_{1}, p_{3}, p_{2}\right)\right|\leq
\varepsilon,
\end{equation}
holds on $\Gamma^{\circ}_{3}$.
Then there exist $a, b\in\mathbb{R}$ such that
\begin{equation}\label{Eq3.5}
\left|
I_{n}\left(p_{1}, \ldots, p_{n}\right)-
\left(a H^{\alpha}_{n}\left(p_{1}, \ldots, p_{n}\right)+b\left(p^{\alpha}_{1}-1\right)\right)\right|\leq \sum^{n-1}_{k=2}\varepsilon_{k}
\end{equation}
for all $n\geq 2$ and $\left(p_{1}, \ldots, p_{n}\right)\in\Gamma^{\circ}_{n}$,
where the convention $\sum^{1}_{k=2}\varepsilon_{k}=0$ is adopted.
\end{thm}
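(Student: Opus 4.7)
The plan is to reduce the $n=2$ case of (\ref{Eq3.5}) to Theorem~\ref{Thm2.1} and then to obtain the case of general $n$ by a short induction driven by the approximate $\alpha$-recursivity (\ref{Eq3.2}). Set $f(t):=I_2(1-t,t)$ and fix $(x,y)\in D_2$. Applying (\ref{Eq3.2}) with $n=3$ to the triples $(1-x-y,y,x)$ and $(1-x-y,x,y)$ in $\Gamma_3^\circ$ expresses $I_3(1-x-y,y,x)$ and $I_3(1-x-y,x,y)$ in terms of $f(x)+(1-x)^\alpha f(y/(1-x))$ and $f(y)+(1-y)^\alpha f(x/(1-y))$, respectively, each up to error $\varepsilon_2$. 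Combining these with the approximate 3-semi-symmetry (\ref{Eq3.4}) via the triangle inequality yields
\[
\left|f(x)+(1-x)^{\alpha}f\!\left(\tfrac{y}{1-x}\right)-f(y)-(1-y)^{\alpha}f\!\left(\tfrac{x}{1-y}\right)\right|\le 2\varepsilon_2+\varepsilon
\]
on $D_2$, so Theorem~\ref{Thm2.1} (hyperstability, for which the value of the majorant in (\ref{Eq2.1}) is irrelevant) produces constants $c,d\in\mathbb{R}$ with $f(t)=ct^{\alpha}+d(1-t)^{\alpha}-d$. Equivalently, $I_2(p_1,p_2)=dp_1^{\alpha}+cp_2^{\alpha}-d$ on $\Gamma_2^{\circ}$.

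Choose $a:=c(2^{1-\alpha}-1)$ and $b:=d-c$, and write $J_n(p_1,\dots,p_n):=aH_n^{\alpha}(p_1,\dots,p_n)+b(p_1^{\alpha}-1)$. A direct expansion shows $J_2=I_2$ on $\Gamma_2^\circ$, which is exactly the $n=2$ case of (\ref{Eq3.5}) (since the convention gives bound $0$ there). For the induction step, recall that $(J_n)$ is exactly $\alpha$-recursive (as observed just before the statement). Using $J_2=I_2$, inequality (\ref{Eq3.2}) becomes
\[
\bigl|I_n-I_{n-1}(p_1+p_2,p_3,\dots,p_n)-(p_1+p_2)^{\alpha}J_2\!\left(\tfrac{p_1}{p_1+p_2},\tfrac{p_2}{p_1+p_2}\right)\bigr|\le\varepsilon_{n-1},
\]
and adding and subtracting $J_{n-1}(p_1+p_2,p_3,\dots,p_n)$, together with the inductive estimate $|I_{n-1}-J_{n-1}|\le\sum_{k=2}^{n-2}\varepsilon_k$ and exact $\alpha$-recursivity of $J$, gives $|I_n-J_n|\le\sum_{k=2}^{n-1}\varepsilon_k$.

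The only delicate point is the first stage: one has to choose triples in $\Gamma_3^\circ$ whose $\alpha$-recursive decomposition lands precisely on the four summands appearing in (\ref{Eq1.1}). With $f(t)=I_2(1-t,t)$, the choice $(1-x-y,y,x)$ together with its $p_2\leftrightarrow p_3$ swap is forced by requiring $p_1+p_2=1-x$ and $p_2/(p_1+p_2)=y/(1-x)$; once this bookkeeping is correct, Theorem~\ref{Thm2.1} does all the analytic work for $n=2$, and the induction for $n\ge 3$ is purely mechanical.
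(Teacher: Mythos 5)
Your proposal is correct and follows essentially the same route as the paper: reduce the $n=2$ case to Theorem~\ref{Thm2.1} via $f(x)=I_2(1-x,x)$, then induct on $n$ using the exact $\alpha$-recursivity of $J_n=aH_n^{\alpha}+b(p_1^{\alpha}-1)$ and the fact that $J_2=I_2$. The only difference is that you work out explicitly the derivation of the stability inequality for $f$ (the choice of triples $(1-x-y,y,x)$ and $(1-x-y,x,y)$ combined with approximate $3$-semi-symmetry), a step the paper simply cites from Maksa \cite{Mak08}; your bookkeeping there is correct.
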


\begin{proof}
As in Maksa \cite{Mak08}, it can be proved that, due to
(\ref{Eq3.2}) and (\ref{Eq3.4}), for the function $f$
defined by $f(x)=I_{2}\left(1-x, x\right)$, $x\in ]0,1[$ we get that
\[
\left|
f\left(x\right)+\left(1-x\right)^{\alpha}f\left(\frac{y}{1-x}\right)-
f\left(y\right)-\left(1-y\right)^{\alpha}f\left(\frac{x}{1-y}\right)\right|
\leq 2\varepsilon_{2}+\varepsilon_{1}
\]
for all $\left(x, y\right)\in D_{2}$, i.e., \eqref{Eq2.1}
holds with $\varepsilon=2\varepsilon_{2}+\varepsilon_{1}$.
Therefore, applying Theorem \ref{Thm2.1}., we obtain
\eqref{Eq2.2} with some $c, d\in\mathbb{R}$, i.e.,
\[
I_{2}\left(1-x, x\right)=cx^{\alpha}+ d \left(1-x\right)^{\alpha}-d,
\quad \left(x\in ]0,1[\right)
\]
i.e., \eqref{Eq3.5} holds for $n=2$ with $a=(2^{1-\alpha}-1)c$, $b=d-c$.

We continue the proof by induction on $n$.
Suppose that (\ref{Eq3.5}) holds and, for the sake of brevity, introduce the
notation
\[
J_{n}\left(p_{1}, \ldots, p_{n}\right)=
a H^{\alpha}_{n}\left(p_{1}, \ldots, p_{n}\right)+b\left(p^{\alpha}_{1}-1\right)
\]
for all $n\geq 2$, $\left(p_{1}, \ldots, p_{n}\right)\in\Gamma^{\circ}_{n}$.
It can easily be seen that (\ref{Eq3.2}) and (\ref{Eq3.4}) hold on
$\Gamma^{\circ}_{n}$ for $J_{n}$ instead of $I_{n}$ $(n\geq 3)$ with
$\varepsilon_{n}=0$ $(n\geq 2)$.
Thus for all $\left(p_{1}, \ldots, p_{n+1}\right)\in\Gamma^{\circ}_{n+1}$,
we get that

\begin{multline*}
I_{n+1}\left(p_{1}, \ldots, p_{n+1}\right)-J_{n+1}\left(p_{1}, \ldots, p_{n+1}\right) \\
= I_{n+1}\left(p_{1}, \ldots, p_{n+1}\right)-
J_{n}\left(p_{1}+p_{2}, p_{3}, \ldots, p_{n+1}\right)\\-
\left(p_{1}+p_{2}\right)^{\alpha}
J_{2}\left(\frac{p_{1}}{p_{1}+p_{2}}, \frac{p_{2}}{p_{1}+p_{2}}\right) \\
= I_{n+1}\left(p_{1}, \ldots, p_{n+1}\right)-
I_{n}\left(p_{1}+p_{2}, p_{3}, \ldots, p_{n+1}\right)\\-
\left(p_{1}+p_{2}\right)^{\alpha}
I_{2}\left(\frac{p_{1}}{p_{1}+p_{2}}, \frac{p_{2}}{p_{1}+p_{2}}\right) \\
+ I_{n}\left(p_{1}+p_{2}, p_{3}, \ldots, p_{n+1}\right)-
J_{n}\left(p_{1}+p_{2}, p_{3}, \ldots, p_{n+1}\right) \\
+ \left(p_{1}+p_{2}\right)^{\alpha}
I_{2}\left(\frac{p_{1}}{p_{1}+p_{2}}, \frac{p_{2}}{p_{1}+p_{2}}\right)\\-
\left(p_{1}+p_{2}\right)^{\alpha}
J_{2}\left(\frac{p_{1}}{p_{1}+p_{2}}, \frac{p_{2}}{p_{1}+p_{2}}\right).
\end{multline*}

Therefore (\ref{Eq3.5}) with $n=2$ and the induction hypothesis imply that
\[
\left|
I_{n+1}\left(p_{1}, \ldots, p_{n+1}\right)-
J_{n}\left(p_{1}, \ldots, p_{n+1}\right)\right|\leq
\varepsilon_{n}+\sum^{n-1}_{k=2}\varepsilon_{k}=
\sum^{n}_{k=2}\varepsilon_{k},
\]
that is, (\ref{Eq3.5}) holds for $n+1$ instead of $n$.
\end{proof}

\begin{cor}
Applying Theorem \ref{Thm3.2}. with the choice $\varepsilon_{n}=0$
for all $n\in\mathbb{N}$, we get the $\alpha$-recursive, $3$-semi-symmetric
information measures. Hence the previous theorem says that the system of
$\alpha$-recursive and $3$-semi-symmetric information measures is stable.
\end{cor}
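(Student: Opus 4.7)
The plan is to derive this corollary as an immediate specialization of Theorem \ref{Thm3.2} to the exact case $\varepsilon_n = 0$ for all $n$. First I would verify the key translation step: substituting $\varepsilon_n = 0$ into (\ref{Eq3.2}) turns that inequality into the $\alpha$-recursion identity defining $\alpha$-recursive sequences, and substituting $\varepsilon = 0$ into (\ref{Eq3.4}) turns that inequality into the $3$-semi-symmetry identity. Hence a sequence $(I_n)$ of functions $I_n : \Gamma^{\circ}_n \to \mathbb{R}$ is $\alpha$-recursive and $3$-semi-symmetric if and only if it satisfies the hypotheses of Theorem \ref{Thm3.2} with every $\varepsilon_n$ set to zero.

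Next I would plug these vanishing tolerances into the conclusion (\ref{Eq3.5}). With the convention $\sum^{1}_{k=2}\varepsilon_k = 0$, the right-hand side becomes zero for every $n \geq 2$, so the inequality collapses to the equality
\[
I_n(p_1, \ldots, p_n) = aH^{\alpha}_n(p_1, \ldots, p_n) + b(p_1^{\alpha} - 1)
\]
for some constants $a, b \in \mathbb{R}$, valid for all $n \geq 2$ and all $(p_1, \ldots, p_n) \in \Gamma^{\circ}_n$. This is precisely the general form, already noted before Theorem \ref{Thm3.2}, of the $\alpha$-recursive and $3$-semi-symmetric information measures, so the characterization is complete.

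To read off stability: returning to the general (not necessarily zero) $\varepsilon_n$ case in Theorem \ref{Thm3.2}, the exact solutions $J_n(p_1, \ldots, p_n) = aH^{\alpha}_n(p_1, \ldots, p_n) + b(p_1^{\alpha} - 1)$ of the defining system sit within distance $\sum_{k=2}^{n-1} \varepsilon_k$ of any approximate solution $I_n$. Since every approximate solution is uniformly close to a genuine solution, with closeness controlled by the given tolerances, this is by definition the Hyers--Ulam stability of the system of equations defining the $\alpha$-recursive, $3$-semi-symmetric information measures. There is no genuine obstacle in this argument; the corollary is simply an unwinding of Theorem \ref{Thm3.2} in the special case where the perturbations vanish, together with the observation that the resulting exact-solution bound is what one means by stability of the system.
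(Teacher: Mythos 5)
Your proposal is correct and matches the paper's (implicit) argument exactly: the paper treats this corollary as an immediate specialization of Theorem \ref{Thm3.2} to $\varepsilon_{n}=0$, under which the hypotheses \eqref{Eq3.2} and \eqref{Eq3.4} become the exact $\alpha$-recursion and $3$-semi-symmetry identities and the conclusion \eqref{Eq3.5} collapses to the equality $I_{n}=aH^{\alpha}_{n}+b(p_{1}^{\alpha}-1)$, while the general-$\varepsilon_{n}$ statement is then read as Hyers--Ulam stability of the system. Your unwinding adds nothing beyond what the paper intends and is accurate.
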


\noindent
\textbf{Acknowledgement.}
The author is grateful to Professor Gyula Maksa for his helpful comments.


\begin{thebibliography}{99}


\bibitem{AD75}
J.~Acz\'{e}l, Z.~Dar\'{o}czy,
\emph{On measures of information and their characterization},
Academic Press, New York -- San Francisco -- London, 1975.

\bibitem{Dar70}
Z.~Dar\'{o}czy, \emph{Generalized information functions}, Information
and Control \textbf{16} (1970), 36--51.

\bibitem{ESS98}
B.~R.~Ebanks, P.~Sahoo, W.~Sander,
\emph{Characterizations of information measures},
World Scientific Publishing Co., Inc., River Edge, NJ, 1998.

\bibitem{For95}
G. L.~Forti, \emph{Hyers--Ulam stability of functional equations in several variables}, Aequationes Math.
\textbf{50} (1995), no. 1--2, 143--190.

\bibitem{Gse08}
E.~Gselmann,
\emph{Recent results on the stability of the parametric fundamental equation of information},
Submitted to Acta Math. Acad. Paedagog. Nyh\'{a}zi.

\bibitem{GM08}
E.~Gselmann, Gy.~Maksa,
\emph{Stability of the parmetric fundamental equation of information for
nonpositive parameters},
To appear in Aequationes Math.

\bibitem{HC67}
J.~Havrda and F.~Charv\'{a}t,
\emph{Quantification Method of Classification Processes,
Concept of Structural $\alpha$-Entropy}, Kybernetika \textbf{3} (1967), 30--35.

\bibitem{Mak82}
Gy.~Maksa,
\emph{Solution on the open triangle of the generalized fundamental
equation of information with four unknown functions},
Utilitas Math. \textbf{21} (1982), 267--282.

\bibitem{Mak08}
Gy.~Maksa,
\emph{The stability of the entropy of degree alpha},
J. Math. Anal. Appl. \textbf{346} (2008), 17--21.

\bibitem{MP01}
Gy.~Maksa, Zs.~P\'{a}les,
\emph{Hyperstability of a class of linear functional equations},
Acta Math. Acad. Paedagog. Nyh\'{a}zi.  \textbf{17} (2001), no. 2, 107-–112.

\bibitem{Mos04}
Z.~Moszner,
\emph{Sur les d\'{e}finitions diff\'{e}rentes de la stabilit\'{e} des \'{e}quations fonctionnelles},
Aequationes Math.  \textbf{68}  (2004),  no. 3, 260--274.

\bibitem{Szek91}
L.~Sz\'{e}kelyhidi,
\emph{38. Problem} (in Report of Meeting), Aequationes Math. \textbf{41} (1991), 302.

\bibitem{Tsa88}
C.~Tsallis,
\emph{Possible Generalization of Boltzmann-Gibbs Statistics},
Journal of Statistical Physics \textbf{52(1-2)} (1988), 479--487.






\end{thebibliography}
\end{document}